%% file: basket_problem_nature.tex
\documentclass[10pt,twocolumn]{article}

\usepackage[margin=2cm,columnsep=0.6cm]{geometry}
\usepackage{amsmath,amssymb,amsthm}
\usepackage{tikz}
\usetikzlibrary{calc,patterns,decorations.pathreplacing,arrows.meta,positioning}
\usepackage{booktabs}
\usepackage{xcolor}
\usepackage{microtype}
\usepackage{longtable}
\usepackage{colortbl}
\usepackage{array}
\usepackage{pgfplots}
\pgfplotsset{compat=1.18}
\usepackage{helvet}
\usepackage{mathpazo}
\usepackage[colorlinks=true,linkcolor=blue!60!black,citecolor=blue!60!black,urlcolor=blue!60!black]{hyperref}

\usepackage{titlesec}
\titleformat{\section}{\large\bfseries\sffamily}{\thesection}{0.5em}{}
\titleformat{\subsection}{\normalsize\bfseries\sffamily}{\thesubsection}{0.5em}{}
\titlespacing{\section}{0pt}{1.2em}{0.4em}
\titlespacing{\subsection}{0pt}{0.8em}{0.3em}

\newtheorem{theorem}{Theorem}
\newtheorem{lemma}[theorem]{Lemma}
\newtheorem{proposition}[theorem]{Proposition}

\theoremstyle{definition}
\newtheorem{definition}[theorem]{Definition}

\theoremstyle{remark}
\newtheorem*{remark}{Remark}

\definecolor{applegreen}{RGB}{106,168,79}
\definecolor{pearamber}{RGB}{212,160,23}
\definecolor{basketbrown}{RGB}{140,100,60}
\definecolor{proofblue}{RGB}{44,82,130}
\definecolor{accentred}{RGB}{180,50,50}

\newcommand{\drawapple}[3]{%
  \fill[red!70!black] (#1,#2) circle ({0.1*#3});
  \pgfmathsetmacro{\stemtop}{#2+0.15*#3}
  \pgfmathsetmacro{\stembot}{#2+0.1*#3}
  \pgfmathsetmacro{\stemx}{#1+0.01*#3}
  \pgfmathsetmacro{\stemlw}{0.3*#3}
  \draw[brown!50!black, line width=\stemlw pt] (#1,\stembot) -- (\stemx,\stemtop);
}

\newcommand{\drawpear}[3]{%
  \pgfmathsetmacro{\pA}{#1-0.08*#3}
  \pgfmathsetmacro{\pB}{#2+0.04*#3}
  \pgfmathsetmacro{\pC}{#1-0.09*#3}
  \pgfmathsetmacro{\pD}{#2+0.12*#3}
  \pgfmathsetmacro{\pE}{#1-0.03*#3}
  \pgfmathsetmacro{\pF}{#2+0.18*#3}
  \pgfmathsetmacro{\pG}{#2+0.22*#3}
  \pgfmathsetmacro{\pH}{#1+0.03*#3}
  \pgfmathsetmacro{\pI}{#1+0.09*#3}
  \pgfmathsetmacro{\pJ}{#1+0.08*#3}
  \fill[yellow!55!green!70] (#1,#2)
    .. controls (\pA,\pB) and (\pC,\pD) .. (\pE,\pF)
    .. controls (#1,\pG) and (#1,\pG) .. (\pH,\pF)
    .. controls (\pI,\pD) and (\pJ,\pB) .. cycle;
}

\title{\textsf{\textbf{The Apple\,--\,Pear Basket Problem:\\A Combinatorial Exploration}}}
\author{Rethna Pulikkoonattu}
\date{}

\begin{document}
\twocolumn[
  \maketitle
  \begin{center}
  \begin{minipage}{0.85\textwidth}
  \small
  \textbf{Abstract.}
  We investigate a combinatorial puzzle in which $N$~apples and $N$~pears are distributed among baskets subject to two constraints: every basket must contain the same number of apples, and every basket must contain a distinct number of pears.  We prove that the maximum number of baskets is the largest divisor of~$N$ not exceeding $(1 + \sqrt{1+8N})/2$.  For the original puzzle with $N = 60$, this yields $10$~baskets.  The solution reveals a rich interplay between divisibility and combinatorics, leading to a natural classification of integers into perfect values, primes, and highly composite numbers according to their basket-packing efficiency.  Computational results for $N$ up to one million confirm the asymptotic growth rate of $\sqrt{2N}$, and a complete tabulation for $N = 1$ to $100$ is included.

  \medskip
  \end{minipage}
  \end{center}
  \vspace{0.5em}
]

\section{Introduction}

Consider the following puzzle, which the author encountered online.  You have $N$~apples and $N$~pears.  You wish to divide all the fruit into baskets subject to two rules: every basket must contain the same number of apples, and every basket must contain a different number of pears.  What is the maximum number of baskets you can use?  Problems of this flavour appear frequently in recreational mathematics~\cite{conway1996,gardner1997}, but this particular formulation, to the author's knowledge, has not been analysed in the literature.

\begin{remark}
The original puzzle posed $N = 60$.  All generalisations, proofs, and the classification framework presented here are the author's own work.
\end{remark}

The problem is easy to state but surprisingly rich.  As we shall see, its solution depends on an interplay between two branches of elementary mathematics: the divisibility properties of~$N$, a number-theoretic condition rooted in classical results on the divisor function~\cite{hardy2008}, and the minimum sum of distinct non-negative integers, a combinatorial condition related to the theory of partitions~\cite{andrews1998}.

\section{The Two Constraints}

Suppose we use $n$~baskets, each containing $k$~apples.  Then $nk = N$, so $n$ must divide $N$.

For the pears, let $p_1, p_2, \ldots, p_n$ denote the (distinct, non-negative) number of pears in each basket, with $p_1 + p_2 + \cdots + p_n = N$.  We seek the minimum possible value of this sum.

\begin{lemma}[Minimum-Sum Lemma]
\label{lem:minsum}
The minimum sum of $n$ distinct non-negative integers is
\[
  0 + 1 + 2 + \cdots + (n-1) = \frac{n(n-1)}{2}\,,
\]
achieved uniquely by the set $\{0, 1, 2, \ldots, n-1\}$.
\end{lemma}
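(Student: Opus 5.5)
The plan is to sort and compare termwise. Given $n$ distinct non-negative integers, I will list them in increasing order as $q_1 < q_2 < \cdots < q_n$. The key claim is that $q_i \ge i-1$ for every $i = 1, \ldots, n$, which I will prove by induction on $i$. For the base case, $q_1 \ge 0$ holds by non-negativity. For the step, if $q_{i} \ge i-1$, then strict increase among integers gives $q_{i+1} \ge q_i + 1 \ge i$.

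Summing the termwise inequalities then gives
\[
  \sum_{i=1}^n q_i \;\ge\; \sum_{i=1}^n (i-1) \;=\; \frac{n(n-1)}{2},
\]
and the set $\{0,1,\ldots,n-1\}$ attains this value, so the minimum equals $n(n-1)/2$.

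For uniqueness, I will suppose $\sum q_i = n(n-1)/2$. Each difference $q_i - (i-1)$ is non-negative and the differences sum to zero, so every difference vanishes. Hence $q_i = i-1$ for all $i$, which means the set is exactly $\{0,1,\ldots,n-1\}$.

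No step here is a genuine obstacle. The only point needing care is the use of integrality in the inductive step: it is what converts $q_{i+1} > q_i$ into $q_{i+1} \ge q_i + 1$. Uniqueness should be stated for the set, since the order of the $p_j$ among the baskets is arbitrary.
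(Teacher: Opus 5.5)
Your proof is correct and follows the paper's own argument: sort the integers, establish $q_i \ge i-1$ termwise, sum, and read off the equality case. The only difference is that you spell out the induction behind $q_i \ge i-1$ and the uniqueness step (non-negative differences summing to zero must all vanish), both of which the paper leaves implicit.
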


\begin{proof}
Let $a_1 < a_2 < \cdots < a_n$ be $n$~distinct non-negative integers.  Since they are distinct integers with $a_1 \ge 0$, we have $a_i \ge i - 1$ for each $i$.  Therefore
\[
  \sum_{i=1}^n a_i \ge \sum_{i=1}^n (i-1) = \frac{n(n-1)}{2}\,,
\]
with equality if and only if $a_i = i - 1$ for all $i$.
\end{proof}

Since the pear total must equal exactly~$N$, we need $n(n-1)/2 \le N$.  The quantity $n(n-1)/2$ is the $n$th triangular number~\cite{oeis_triangular,hardy2008}, a connection that will recur throughout this paper.  Solving for $n$:
\[
  n \;\le\; \frac{1 + \sqrt{1 + 8N}}{2}\,.
\]

\section{The General Solution}

Combining both constraints yields the main result.

\begin{theorem}[General Solution]
\label{thm:general}
The maximum number of baskets is
\[
  n_{\max} = \max\!\left\{\, d \in \mathbb{Z}^+ : d \mid N \;\text{and}\; d \le \frac{1 + \sqrt{1+8N}}{2} \right\}.
\]
\end{theorem}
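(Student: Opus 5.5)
The plan is to characterise exactly which basket counts $n$ are feasible and then take the largest. I will show that $n$ baskets can be used if and only if $n \mid N$ and $n(n-1)/2 \le N$. Since $n(n-1)/2$ is increasing for $n \ge 1$, the second condition is equivalent to $n \le (1+\sqrt{1+8N})/2$. Then $n_{\max}$ is the largest element of the feasible set, which is the formula in Theorem~\ref{thm:general}. The set is nonempty because $d=1$ always qualifies ($1 \mid N$ and $0 \le N$), so the maximum exists.

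\textbf{Necessity.} Suppose a valid arrangement uses $n$ baskets. The apple rule gives $nk = N$ with $k$ the common apple count, so $n \mid N$. The pear counts $p_1,\dots,p_n$ are distinct non-negative integers summing to $N$. By Lemma~\ref{lem:minsum}, $N \ge n(n-1)/2$. Solving the quadratic $n^2 - n - 2N \le 0$ for positive $n$ gives the stated bound.

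\textbf{Sufficiency.} Take any $d$ with $d \mid N$ and $d(d-1)/2 \le N$. Put $k = N/d$ apples in each of $d$ baskets. For pears, start from the minimal configuration $0,1,\dots,d-1$, whose sum is $d(d-1)/2$. Add the surplus $r = N - d(d-1)/2 \ge 0$ to the basket with $d-1$ pears, so the counts become $0,1,\dots,d-2,\,d-1+r$. These are still distinct, because the largest only grows, and they sum to exactly $N$. So $d$ baskets are achievable.

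The only step needing any care is sufficiency: the surplus must be absorbed without breaking distinctness, and adding it all to the largest entry handles this. One should also note the implicit conventions that a basket may contain zero pears (the paper allows non-negative counts) and that $k \ge 1$, which holds because $d \le N$. That inequality follows from $d \mid N$ with $N \ge 1$. Combining necessity and sufficiency identifies the feasible set exactly, and the maximum over it is $n_{\max}$.
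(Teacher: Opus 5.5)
Your proof is correct and follows the paper's route. Necessity comes from $n \mid N$ together with the Minimum-Sum Lemma, and sufficiency uses the same construction the paper gives in the Remark after Theorem~\ref{thm:general}, placing the whole surplus $N - n(n-1)/2$ on the largest pear count. You simply fold that sufficiency direction into the proof itself and add the small checks the paper leaves implicit: nonemptiness via $d=1$, monotonicity of $n(n-1)/2$, and $k \ge 1$.
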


\begin{proof}
Any valid $n$ must satisfy both $n \mid N$ (apple constraint) and $n(n-1)/2 \le N$ (pear constraint).  Among all such divisors, the largest one gives $n_{\max}$.
\end{proof}

\begin{remark}
A valid pear distribution always exists when both constraints hold.  Given any $n$ with $n \mid N$ and $n(n-1)/2 \le N$, assign pears as $p_i = i - 1$ for $i = 1, \ldots, n-1$ and $p_n = N - n(n-1)/2 + (n-1)$.  Since $N - n(n-1)/2 \ge 0$, we have $p_n \ge n - 1 > p_{n-1}$, so all values remain distinct.
\end{remark}

For the original puzzle with $N = 60$: the pear bound is $(1 + \sqrt{481})/2 \approx 11.47$.  The divisors of $60$ that do not exceed this bound are $\{1, 2, 3, 4, 5, 6, 10\}$.  The largest is $n_{\max} = 10$, with $k = 6$ apples per basket and pears distributed as $\{0, 1, 2, 3, 4, 5, 6, 7, 8, 24\}$.  The complete solution is illustrated in Figure~\ref{fig:closing}.

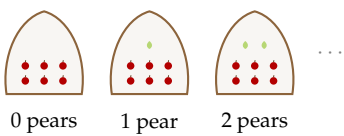
\begin{figure}[t]
\centering
\begin{tikzpicture}[scale=0.5]
  \foreach \b/\p/\xoff in {1/0/0, 2/1/2.8, 3/2/5.6} {
    \draw[thick, basketbrown, fill=basketbrown!6]
      (\xoff-1.0,-0.2) .. controls (\xoff-1.15,0.8) and (\xoff-0.7,1.8) ..
      (\xoff,2.0) .. controls (\xoff+0.7,1.8) and (\xoff+1.15,0.8) ..
      (\xoff+1.0,-0.2) -- cycle;
    \drawapple{\xoff-0.5}{0.15}{1}
    \drawapple{\xoff}{0.15}{1}
    \drawapple{\xoff+0.5}{0.15}{1}
    \drawapple{\xoff-0.5}{0.55}{1}
    \drawapple{\xoff}{0.55}{1}
    \drawapple{\xoff+0.5}{0.55}{1}
  }
  \drawpear{2.8}{1.0}{1}
  \drawpear{5.35}{1.0}{1}
  \drawpear{5.85}{1.0}{1}
  \node[below, font=\footnotesize] at (0,-0.45) {0 pears};
  \node[below, font=\footnotesize] at (2.8,-0.45) {1 pear};
  \node[below, font=\footnotesize] at (5.6,-0.45) {2 pears};
  \node[right, font=\footnotesize\itshape, text=gray] at (7.0,0.9) {$\cdots$};
\end{tikzpicture}
\caption{Each basket has the same number of apples (red) but a distinct number of pears (green).}
\label{fig:baskets}
\end{figure}

\section{Classification of $N$}

The interplay between the smooth pear bound and the arithmetic divisibility constraint creates a rich landscape.

\subsection{Perfect Values}
\label{sec:perfect}

\begin{definition}
We call $N$ \emph{perfect} for this problem if the pear constraint is exactly tight and $n \mid N$: that is, $N = n(n-1)/2$ and $n \mid n(n-1)/2$.
\end{definition}

Since $n(n-1)/2$ divided by $n$ equals $(n-1)/2$, divisibility holds if and only if $n$ is odd.  This gives the family shown in Figure~\ref{fig:triangular}, whose members are precisely the even-indexed triangular numbers~\cite{oeis_triangular}.

\begin{figure*}[!t]
\centering
\begin{tikzpicture}[scale=0.55]

  \foreach \row in {0,...,3} {
    \pgfmathsetmacro{\ypos}{(3-\row)*0.24}
    \foreach \col in {0,...,\row} {
      \fill[pearamber] (\col*0.26 - \row*0.13, \ypos) circle (0.10);
    }
  }
  \node[font=\small, below] at (0, -0.3) {$T_4 = 10$};

  \begin{scope}[xshift=2.5cm]
  \foreach \row in {0,...,5} {
    \pgfmathsetmacro{\ypos}{(5-\row)*0.24}
    \foreach \col in {0,...,\row} {
      \fill[proofblue, opacity=0.7] (\col*0.26 - \row*0.13, \ypos) circle (0.10);
    }
  }
  \node[font=\small, below] at (0, -0.3) {$T_6 = 21$};
  \end{scope}

  \begin{scope}[xshift=5.5cm]
  \foreach \row in {0,...,7} {
    \pgfmathsetmacro{\ypos}{(7-\row)*0.24}
    \foreach \col in {0,...,\row} {
      \fill[applegreen, opacity=0.7] (\col*0.26 - \row*0.13, \ypos) circle (0.10);
    }
  }
  \node[font=\small, below] at (0, -0.3) {$T_8 = 36$};
  \end{scope}

  \begin{scope}[xshift=9.5cm]
  \foreach \row in {0,...,9} {
    \pgfmathsetmacro{\ypos}{(9-\row)*0.24}
    \foreach \col in {0,...,\row} {
      \fill[accentred, opacity=0.6] (\col*0.26 - \row*0.13, \ypos) circle (0.10);
    }
  }
  \node[font=\small, below] at (0, -0.3) {$T_{10} = 55$};
  \end{scope}

  \begin{scope}[xshift=14cm]
  \foreach \row in {0,...,11} {
    \pgfmathsetmacro{\ypos}{(11-\row)*0.24}
    \foreach \col in {0,...,\row} {
      \fill[basketbrown, opacity=0.6] (\col*0.26 - \row*0.13, \ypos) circle (0.10);
    }
  }
  \node[font=\small, below] at (0, -0.3) {$T_{12} = 78$};
  \end{scope}

  \begin{scope}[xshift=19.5cm]
  \foreach \row in {0,...,13} {
    \pgfmathsetmacro{\ypos}{(13-\row)*0.24}
    \foreach \col in {0,...,\row} {
      \fill[purple!60!blue, opacity=0.5] (\col*0.26 - \row*0.13, \ypos) circle (0.10);
    }
  }
  \node[font=\small, below] at (0, -0.3) {$T_{14} = 105$};
  \end{scope}

  \begin{scope}[xshift=25.5cm]
  \foreach \row in {0,...,15} {
    \pgfmathsetmacro{\ypos}{(15-\row)*0.24}
    \foreach \col in {0,...,\row} {
      \fill[gray!60, opacity=0.6] (\col*0.26 - \row*0.13, \ypos) circle (0.10);
    }
  }
  \node[font=\small, below] at (0, -0.3) {$T_{16} = 136$};
  \end{scope}
\end{tikzpicture}
\caption{Triangular numbers yielding perfect solutions: $T_4 = 10$ through $T_{16} = 136$.  Dot size is uniform across all seven triangles; only the triangle itself grows.  Bottom alignment emphasises the quadratic growth of the base width, corresponding to the $n^2/2$ scaling of the perfect values.}
\label{fig:triangular}
\end{figure*}
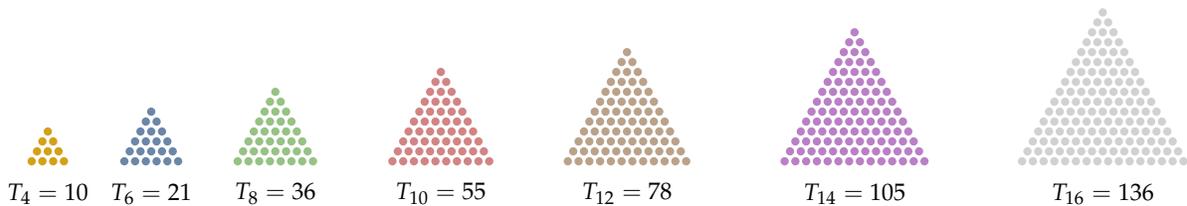

\subsection{The Prime Barrier}
\label{sec:primes}

When $N = p$ is prime, its only divisors are $1$ and $p$.  For any prime $p \ge 5$, the value of~$p$ exceeds the pear bound, so $n_{\max} = 1$.  This sharp collapse at primes is a recurring theme in problems that depend on divisor structure~\cite{hardy2008,apostol1976}.

\subsection{Highly Composite Numbers}

Numbers with many small divisors, such as $60 = 2^2 \cdot 3 \cdot 5$ or $120 = 2^3 \cdot 3 \cdot 5$, tend to have a divisor close to the pear bound.  The efficiency $n_{\max}/\text{bound}$ for these numbers is consistently high.  Ramanujan~\cite{ramanujan1915} initiated the systematic study of such highly composite numbers, and the present problem provides a natural context in which their special status becomes visible.

\section{Asymptotic Behaviour}

The pear bound grows as $\sqrt{2N}$, so $n_{\max}$ can be at most $O(\sqrt{N})$.  Perfect values achieve this rate exactly.  By contrast, primes are locked at $n_{\max} = 1$ regardless of their size.  The prime number theorem~\cite{hadamard1896,vallee1896}, which states that the number of primes up to~$N$ is asymptotic to $N/\ln N$, ensures that primes thin at rate $1/\ln N$, so the floor at $n_{\max} = 1$ becomes sparser but never vanishes.  A comprehensive treatment of the prime counting function and its asymptotics can be found in Apostol~\cite{apostol1976}.

\section{Computational Results}

To visualise the behaviour of $n_{\max}$ across a wide range of~$N$, we computed the solution of Theorem~\ref{thm:general} for every integer up to one million.  Three representative scatter plots are shown in Figures~\ref{fig:scatter200},~\ref{fig:scatter10k}, and~\ref{fig:scatter1m}, each revealing progressively more of the problem's asymptotic character.

At the smallest scale (Figure~\ref{fig:scatter200}, $N \le 200$), the fan structure is already visible: perfect values (Section~\ref{sec:perfect}) trace the pear bound from above, primes (Section~\ref{sec:primes}) are pinned to the floor at $n_{\max} = 1$, and all other integers scatter between these extremes according to their divisor structure.

\begin{figure*}[t]
\centering
\begin{tikzpicture}
  \begin{axis}[
    width=\linewidth,
    height=6.5cm,
    xlabel={$N$},
    ylabel={$n_{\max}$},
    xmin=0, xmax=200,
    ymin=0, ymax=22,
    grid=both,
    grid style={gray!10},
    tick label style={font=\scriptsize},
    label style={font=\small},
    legend style={font=\scriptsize, at={(0.02,0.98)}, anchor=north west},
  ]
  \addplot[proofblue, thick, dashed, domain=1:200, samples=100]
    {(1+sqrt(1+8*x))/2};
  \addlegendentry{Pear bound}
  \addplot[only marks, mark=*, mark size=1.2pt, applegreen!80!black] coordinates {
    (1,1)(2,2)(3,3)(4,2)(5,1)(6,3)(7,1)(8,4)(9,3)(10,5)
    (11,1)(12,4)(13,1)(14,2)(15,5)(16,4)(17,1)(18,6)(19,1)(20,5)
    (21,7)(22,2)(23,1)(24,6)(25,5)(26,2)(27,3)(28,7)(29,1)(30,6)
    (31,1)(32,8)(33,3)(34,2)(35,7)(36,9)(37,1)(38,2)(39,3)(40,8)
    (41,1)(42,7)(43,1)(44,4)(45,9)(46,2)(47,1)(48,8)(49,7)(50,10)
    (51,3)(52,4)(53,1)(54,9)(55,11)(56,8)(57,3)(58,2)(59,1)(60,10)
    (61,1)(62,2)(63,9)(64,8)(65,5)(66,12)(67,1)(68,4)(69,3)(70,10)
    (71,1)(72,12)(73,1)(74,2)(75,5)(76,4)(77,7)(78,13)(79,1)(80,10)
    (81,9)(82,2)(83,1)(84,12)(85,5)(86,2)(87,3)(88,8)(89,1)(90,9)
    (91,13)(92,4)(93,3)(94,2)(95,5)(96,12)(97,1)(98,14)(99,11)(100,10)
    (101,1)(102,6)(103,1)(104,8)(105,15)(106,2)(107,1)(108,12)(109,1)(110,10)
    (111,3)(112,8)(113,1)(114,6)(115,5)(116,4)(117,9)(118,2)(119,7)(120,15)
    (121,11)(122,2)(123,3)(124,4)(125,5)(126,9)(127,1)(128,16)(129,3)(130,10)
    (131,1)(132,12)(133,7)(134,2)(135,15)(136,17)(137,1)(138,6)(139,1)(140,8)
    (141,3)(142,2)(143,11)(144,16)(145,5)(146,2)(147,7)(148,4)(149,1)(150,10)
    (151,1)(152,8)(153,17)(154,14)(155,5)(156,12)(157,1)(158,2)(159,3)(160,16)
    (161,7)(162,18)(163,1)(164,4)(165,15)(166,2)(167,1)(168,8)(169,13)(170,10)
    (171,19)(172,4)(173,1)(174,6)(175,7)(176,16)(177,3)(178,2)(179,1)(180,18)
    (181,1)(182,14)(183,3)(184,8)(185,5)(186,6)(187,11)(188,4)(189,7)(190,10)
    (191,1)(192,16)(193,1)(194,2)(195,15)(196,14)(197,1)(198,9)(199,1)(200,20)
  };
  \addlegendentry{$n_{\max}$}
  \addplot[only marks, mark=o, mark size=2.5pt, thick, pearamber!90!black] coordinates {
    (3,3)(10,5)(21,7)(36,9)(55,11)(78,13)(105,15)(136,17)(171,19)
  };
  \addlegendentry{Perfect values}
  \addplot[only marks, mark=*, mark size=1.2pt, accentred] coordinates {
    (5,1)(7,1)(11,1)(13,1)(17,1)(19,1)(23,1)(29,1)(31,1)(37,1)(41,1)(43,1)(47,1)(53,1)(59,1)(61,1)(67,1)(71,1)(73,1)(79,1)(83,1)(89,1)(97,1)(101,1)(103,1)(107,1)(109,1)(113,1)(127,1)(131,1)(137,1)(139,1)(149,1)(151,1)(157,1)(163,1)(167,1)(173,1)(179,1)(181,1)(191,1)(193,1)(197,1)(199,1)
  };
  \addlegendentry{Primes}
  \end{axis}
\end{tikzpicture}
\caption{$n_{\max}$ for $N = 1$ to $200$.  Perfect values (amber) trace the pear bound.  Primes (red) are pinned at $n_{\max} = 1$.}
\label{fig:scatter200}
\end{figure*}
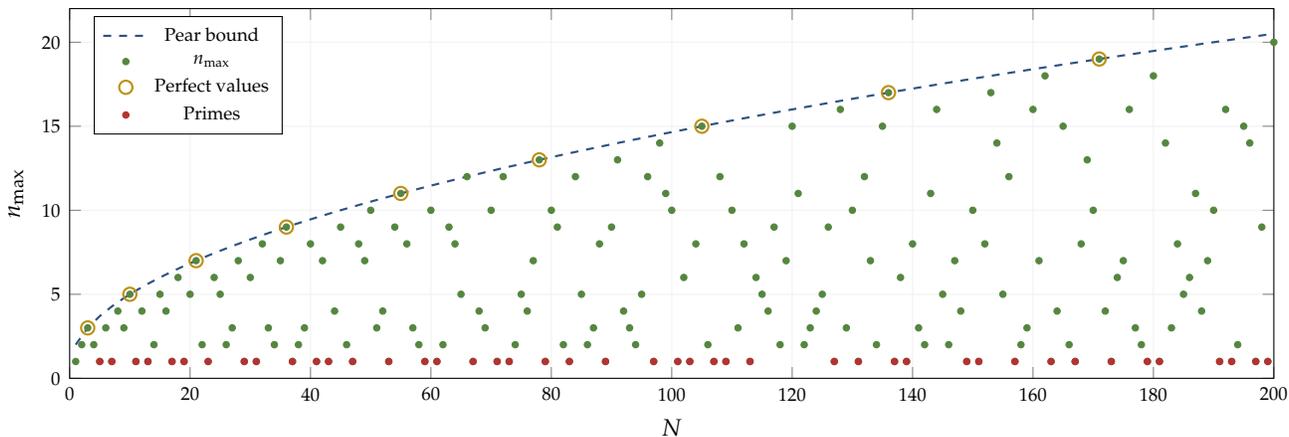

Extending the computation to larger scales reveals the full asymptotic character of the problem.  At $N = 10{,}000$ (Figure~\ref{fig:scatter10k}), the $\sqrt{2N}$ envelope becomes unmistakable, and the prime floor begins to thin visibly.  At $N = 1{,}000{,}000$ (Figure~\ref{fig:scatter1m}), the $706$~perfect values march along the envelope in perfect order, while the prime floor, though still present, has thinned to the density predicted by the prime number theorem~\cite{apostol1976}.

\begin{figure*}[t]
\centering
\begin{tikzpicture}
  \begin{axis}[
    width=\linewidth,
    height=6.5cm,
    xlabel={$N$},
    ylabel={$n_{\max}$},
    xmin=0, xmax=10000,
    ymin=0, ymax=150,
    grid=both,
    grid style={gray!10},
    tick label style={font=\scriptsize},
    label style={font=\small},
    legend style={font=\scriptsize, at={(0.02,0.98)}, anchor=north west},
  ]
  \addplot[proofblue, thick, dashed, domain=1:10000, samples=100]
    {(1+sqrt(1+8*x))/2};
  \addlegendentry{Pear bound}
  \addplot[only marks, mark=*, mark size=0.5pt, applegreen!80!black]
    table[x=N, y=nmax] {nmax_sampled.csv};
  \addlegendentry{$n_{\max}$}
  \addplot[only marks, mark=o, mark size=2pt, thick, pearamber!90!black]
    table[x=N, y=nmax] {nmax_perfect.csv};
  \addlegendentry{Perfect values}
  \addplot[only marks, mark=*, mark size=0.5pt, accentred]
    table[x=N, y=nmax] {nmax_primes_10k.csv};
  \addlegendentry{Primes}
  \end{axis}
\end{tikzpicture}
\caption{$n_{\max}$ for $N$ up to $10{,}000$.  The $\sqrt{2N}$ envelope is unmistakable.}
\label{fig:scatter10k}
\end{figure*}
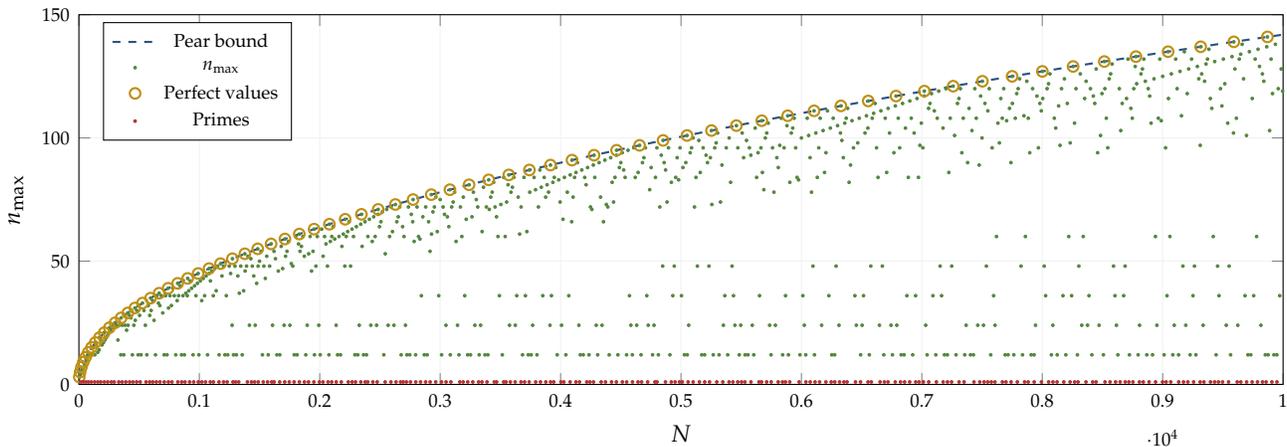

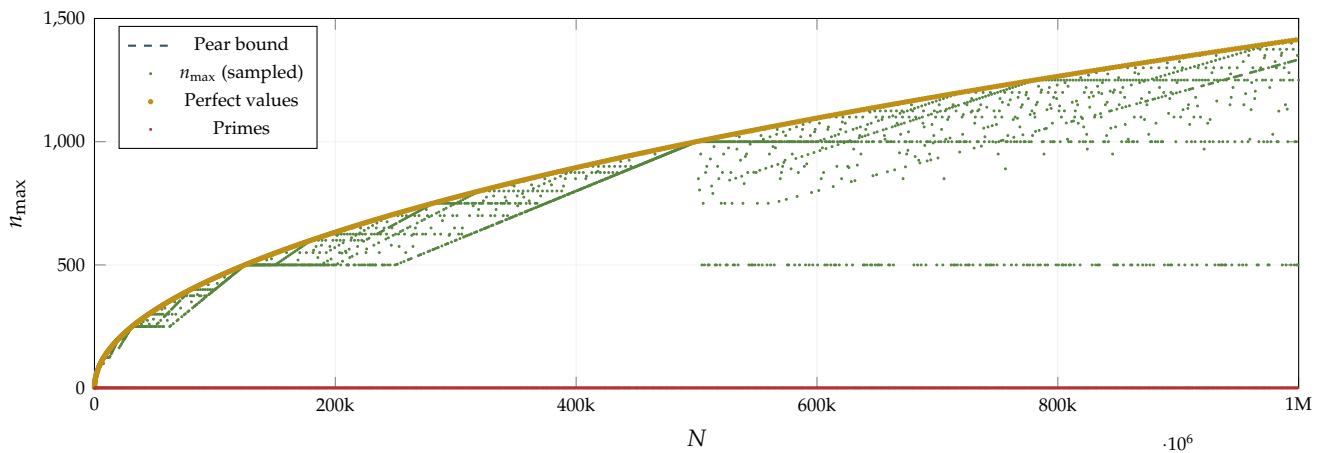
\begin{figure*}[t]
\centering
\begin{tikzpicture}
  \begin{axis}[
    width=\linewidth,
    height=6.5cm,
    xlabel={$N$},
    ylabel={$n_{\max}$},
    xmin=0, xmax=1000000,
    ymin=0, ymax=1500,
    grid=both,
    grid style={gray!10},
    tick label style={font=\scriptsize},
    label style={font=\small},
    legend style={font=\scriptsize, at={(0.02,0.98)}, anchor=north west},
    xtick={0,200000,400000,600000,800000,1000000},
    xticklabels={$0$,$200\text{k}$,$400\text{k}$,$600\text{k}$,$800\text{k}$,$1\text{M}$},
  ]
  \addplot[proofblue, thick, dashed, domain=1:1000000, samples=200]
    {(1+sqrt(1+8*x))/2};
  \addlegendentry{Pear bound}
  \addplot[only marks, mark=*, mark size=0.35pt, applegreen!80!black]
    table[x=N, y=nmax] {nmax_1m_sampled.csv};
  \addlegendentry{$n_{\max}$ (sampled)}
  \addplot[only marks, mark=*, mark size=0.8pt, pearamber!90!black]
    table[x=N, y=nmax] {nmax_1m_perfect.csv};
  \addlegendentry{Perfect values}
  \addplot[only marks, mark=*, mark size=0.35pt, accentred]
    table[x=N, y=nmax] {nmax_primes_1m.csv};
  \addlegendentry{Primes}
  \end{axis}
\end{tikzpicture}
\caption{Panoramic view: $n_{\max}$ for $N$ up to one million.  At this scale, the $706$~perfect values trace the $\sqrt{2N}$ envelope while the prime floor thins according to the prime number theorem.}
\label{fig:scatter1m}
\end{figure*}

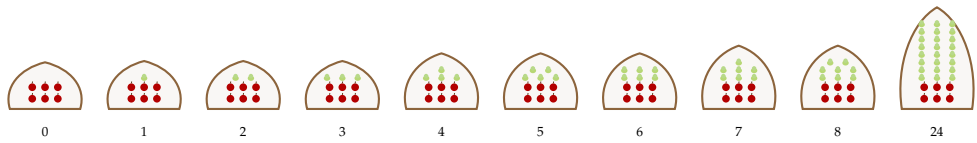
\begin{figure*}[t]
\centering
\begin{tikzpicture}[scale=0.85]
  \input{closing_baskets.tex}
\end{tikzpicture}
\caption{The complete solution for $N = 60$: ten baskets, each containing $6$~apples (red), with pears (green) distributed as $\{0, 1, 2, 3, 4, 5, 6, 7, 8, 24\}$.  Every apple and pear is shown individually.  The last basket's $24$~pears visually demonstrate the surplus concentration described in Section~\ref{sec:surplus}.}
\label{fig:closing}
\end{figure*}

\section{The Surplus Distribution}
\label{sec:surplus}

When $n_{\max}$ is determined by Theorem~\ref{thm:general} and the minimum pear sum $n(n-1)/2$ is strictly less than~$N$, there is a surplus of $S = N - n(n-1)/2$ pears that must be allocated among the baskets while preserving the distinctness of the pear counts.  The simplest valid strategy, used throughout this paper, is to add the entire surplus to the last basket, giving the distribution $\{0, 1, \ldots, n-2, \, n-1+S\}$.  However, any redistribution that maintains distinct non-negative values summing to~$N$ is equally valid.

\begin{proposition}
For any $n$ and $N$ satisfying the constraints of Theorem~\ref{thm:general}, the number of valid pear distributions is at least~$1$ and grows with the surplus $S = N - n(n-1)/2$.
\end{proposition}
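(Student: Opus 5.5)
Fix $n$ and $N$ with $n \mid N$ and $n(n-1)/2 \le N$, and put $S = N - n(n-1)/2 \ge 0$. The plan is to turn the count of pear distributions into a partition count depending only on $n$ and $S$. Existence is immediate: the Remark after Theorem~\ref{thm:general} already gives the distribution $\{0,1,\ldots,n-2,\,n-1+S\}$, so the count is at least $1$. For growth, I will count valid distributions as unordered sets $\{p_1,\ldots,p_n\}$ of distinct non-negative integers with sum $N$; which basket gets which count only multiplies the total by $n!$ and does not change monotonicity.

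The key step is the standard shift from the proof of Lemma~\ref{lem:minsum}. Sort the set as $a_1<a_2<\cdots<a_n$ and put $b_i = a_i-(i-1)$. The lemma's argument gives $b_i\ge 0$, and distinctness gives $a_{i+1}\ge a_i+1$, i.e. $b_{i+1}\ge b_i$. Also
\[
  \sum_{i=1}^n b_i = N - \tfrac{n(n-1)}{2} = S .
\]
Conversely, any weakly increasing non-negative sequence $b_1\le\cdots\le b_n$ with sum $S$ gives a valid distribution via $a_i=b_i+i-1$. So the valid distributions are in bijection with partitions of $S$ into at most $n$ parts, and their number is $p_{\le n}(S)$.

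For growth in $S$, I will use the injection that adds one to the largest part, $b_n\mapsto b_n+1$. It is well defined, since the sequence stays weakly increasing with sum $S+1$, and it is injective. Hence $p_{\le n}(S+1)\ge p_{\le n}(S)$, so the count is non-decreasing in $S$.

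Strict growth needs a partition of $S+1$ outside the image, meaning one whose largest part equals its second largest; a concrete choice is $\{1,1,\ldots\}$ type shapes. This is where the main obstacle lies: the claim is false in degenerate cases, and the proof has to identify them precisely.
\begin{itemize}
\item For $n=1$ the count is always exactly $1$.
\item For $n\ge 2$ and $S+1\ge 2$, the partition $(1,1,\ldots,1)$ into $S+1$ parts is not in the image, but it uses $S+1$ parts, so it is only available when $S+1\le n$.
\item More robustly, for $n\ge2$ and $S\ge1$, take the partition of $S+1$ whose two largest parts are equal, $\lfloor (S+1)/2\rfloor$ each, with a remainder part of $1$ when $S+1$ is odd. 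This needs at most $3$ parts, which is fine for $n\ge3$.
\item For $n=2$ it can fail at odd $S+1$: the count is $\lfloor S/2\rfloor+1$, which is not strictly increasing.
\end{itemize}
Accordingly, I will state the conclusion as monotone non-decreasing growth in $S$, with $p_{\le n}(S)\to\infty$ as $S\to\infty$ for every $n\ge2$ (since $p_{\le 2}(S)=\lfloor S/2\rfloor+1$). I will note that "grows" must be read in this weak sense, and that for $n=1$ the count stays at exactly $1$.
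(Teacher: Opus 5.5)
Your proof is correct, and it takes a genuinely different and more complete route than the paper. The paper gives no proof of this proposition. The ``at least $1$'' part rests on the canonical distribution $\{0,1,\ldots,n-2,\,n-1+S\}$ from the Remark after Theorem~\ref{thm:general}. ``Grows with the surplus'' is supported only by the informal remark that the surplus could be spread out in other ways.

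Your argument adds three things:
\begin{itemize}
\item The shift $b_i=a_i-(i-1)$ from the proof of Lemma~\ref{lem:minsum} gives an exact count: $p_{\le n}(S)$, the number of partitions of $S$ into at most $n$ parts.
\item The injection that adds one to the largest part makes monotonicity rigorous.
\item It shows what the paper's statement hides: for $n=1$ the count is identically $1$, so ``grows'' can hold only in the weak sense.
\end{itemize}
The paper's construction settles existence and nothing quantitative; yours settles both parts. All of your individual steps check out, including $p_{\le 2}(S)=\lfloor S/2\rfloor+1$ and the equal-largest-parts witnesses for strict growth.

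One refinement is worth adding. The proposition only concerns pairs with $n\mid N$, so for fixed $n$ the admissible surpluses form an arithmetic progression with common difference $n$. Specifically, $S\equiv 0 \pmod n$ for odd $n$ and $S\equiv n/2 \pmod n$ for even $n$.

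Along that progression, strict growth holds for every $n\ge 2$. For any $S$, one of $S+1,S+2$ is an even number $T\ge 2$. The partition $(T/2,T/2)$ lies outside the image of your injection, so $p_{\le n}(T)>p_{\le n}(T-1)$. By monotonicity, $p_{\le n}(S+2)>p_{\le n}(S)$, and hence $p_{\le n}(S+n)\ge p_{\le n}(S+2)>p_{\le n}(S)$.

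So your $n=2$ caveat, and the non-strict step from $S=0$ to $S=1$, concern surplus values that never occur under the hypotheses. For $n=2$ the admissible surpluses $S=N-1$ are all odd, and the count $\lfloor S/2\rfloor+1$ rises by exactly one at each step. Within the proposition's actual scope, $n=1$ is the only genuine exception to strict growth.
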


For the original puzzle with $N = 60$ and $n = 10$, the surplus is $S = 60 - 45 = 15$, and the canonical distribution $\{0, 1, 2, 3, 4, 5, 6, 7, 8, 24\}$ places all $15$ surplus pears in the last basket, as illustrated in Figure~\ref{fig:closing}.

\section{Tabulated Results}

Table~\ref{tab:full} lists the solution of Theorem~\ref{thm:general} for every integer $N$ from~$1$ to~$100$, showing the pear bound, $n_{\max}$, apples per basket~$k$, efficiency, and the full pear distribution.  Rows are shaded amber for perfect values, red for primes, and green for near-perfect values (efficiency $> 0.9$).

\begin{table*}[t]
\centering
\caption{Complete results for $N = 1$ to $100$ by Theorem~\ref{thm:general}.  Left half: $N = 1$ to $50$.  Right half: $N = 51$ to $100$.}\label{tab:full}
\vspace{0.3em}
{\fontsize{6.5}{8.5}\selectfont
\setlength{\tabcolsep}{2.5pt}
\begin{tabular}{r r r r r l @{\quad} r r r r r l}
\toprule
$N$ & Bnd & $n$ & $k$ & Eff & Pear distribution & $N$ & Bnd & $n$ & $k$ & Eff & Pear distribution \\
\midrule
    1 & 2.0 & \textbf{1} & 1 & 0.50 & $\{1\}$ & 51 & 10.6 & \textbf{3} & 17 & 0.28 & $\{0, 1, 50\}$ \\
    {\color{accentred}2} & 2.6 & \textbf{2} & 1 & 0.78 & $\{0, 2\}$ & 52 & 10.7 & \textbf{4} & 13 & 0.37 & $\{0, 1, 2, 49\}$ \\
    {\color{pearamber!80!black}3} & 3.0 & \textbf{3} & 1 & 1.00 & $\{0, 1, 2\}$ & {\color{accentred}53} & 10.8 & \textbf{1} & 53 & 0.09 & $\{53\}$ \\
    4 & 3.4 & \textbf{2} & 2 & 0.59 & $\{0, 4\}$ & 54 & 10.9 & \textbf{9} & 6 & 0.83 & $\{0, 1, 2, 3, 4, 5, 6, 7, 26\}$ \\
    {\color{accentred}5} & 3.7 & \textbf{1} & 5 & 0.27 & $\{5\}$ & {\color{pearamber!80!black}55} & 11.0 & \textbf{11} & 5 & 1.00 & $\{0, 1, 2, 3, 4, 5, 6, 7, 8, 9, 10\}$ \\
    6 & 4.0 & \textbf{3} & 2 & 0.75 & $\{0, 1, 5\}$ & 56 & 11.1 & \textbf{8} & 7 & 0.72 & $\{0, 1, 2, 3, 4, 5, 6, 35\}$ \\
    {\color{accentred}7} & 4.3 & \textbf{1} & 7 & 0.23 & $\{7\}$ & 57 & 11.2 & \textbf{3} & 19 & 0.27 & $\{0, 1, 56\}$ \\
    8 & 4.5 & \textbf{4} & 2 & 0.88 & $\{0, 1, 2, 5\}$ & 58 & 11.3 & \textbf{2} & 29 & 0.18 & $\{0, 58\}$ \\
    9 & 4.8 & \textbf{3} & 3 & 0.63 & $\{0, 1, 8\}$ & {\color{accentred}59} & 11.4 & \textbf{1} & 59 & 0.09 & $\{59\}$ \\
    {\color{pearamber!80!black}10} & 5.0 & \textbf{5} & 2 & 1.00 & $\{0, 1, 2, 3, 4\}$ & 60 & 11.5 & \textbf{10} & 6 & 0.87 & $\{0, 1, 2, 3, 4, 5, 6, 7, 8, 24\}$ \\
    {\color{accentred}11} & 5.2 & \textbf{1} & 11 & 0.19 & $\{11\}$ & {\color{accentred}61} & 11.6 & \textbf{1} & 61 & 0.09 & $\{61\}$ \\
    12 & 5.4 & \textbf{4} & 3 & 0.74 & $\{0, 1, 2, 9\}$ & 62 & 11.6 & \textbf{2} & 31 & 0.17 & $\{0, 62\}$ \\
    {\color{accentred}13} & 5.6 & \textbf{1} & 13 & 0.18 & $\{13\}$ & 63 & 11.7 & \textbf{9} & 7 & 0.77 & $\{0, 1, 2, 3, 4, 5, 6, 7, 35\}$ \\
    14 & 5.8 & \textbf{2} & 7 & 0.34 & $\{0, 14\}$ & 64 & 11.8 & \textbf{8} & 8 & 0.68 & $\{0, 1, 2, 3, 4, 5, 6, 43\}$ \\
    15 & 6.0 & \textbf{5} & 3 & 0.83 & $\{0, 1, 2, 3, 9\}$ & 65 & 11.9 & \textbf{5} & 13 & 0.42 & $\{0, 1, 2, 3, 59\}$ \\
    16 & 6.2 & \textbf{4} & 4 & 0.65 & $\{0, 1, 2, 13\}$ & {\color{applegreen!70!black}66} & 12.0 & \textbf{11} & 6 & 0.92 & $\{0, 1, 2, 3, 4, 5, 6, 7, 8, 9, 21\}$ \\
    {\color{accentred}17} & 6.4 & \textbf{1} & 17 & 0.16 & $\{17\}$ & {\color{accentred}67} & 12.1 & \textbf{1} & 67 & 0.08 & $\{67\}$ \\
    {\color{applegreen!70!black}18} & 6.5 & \textbf{6} & 3 & 0.92 & $\{0, 1, 2, 3, 4, 8\}$ & 68 & 12.2 & \textbf{4} & 17 & 0.33 & $\{0, 1, 2, 65\}$ \\
    {\color{accentred}19} & 6.7 & \textbf{1} & 19 & 0.15 & $\{19\}$ & 69 & 12.3 & \textbf{3} & 23 & 0.24 & $\{0, 1, 68\}$ \\
    20 & 6.8 & \textbf{5} & 4 & 0.73 & $\{0, 1, 2, 3, 14\}$ & 70 & 12.3 & \textbf{10} & 7 & 0.81 & $\{0, 1, 2, 3, 4, 5, 6, 7, 8, 34\}$ \\
    {\color{pearamber!80!black}21} & 7.0 & \textbf{7} & 3 & 1.00 & $\{0, 1, 2, 3, 4, 5, 6\}$ & {\color{accentred}71} & 12.4 & \textbf{1} & 71 & 0.08 & $\{71\}$ \\
    22 & 7.2 & \textbf{2} & 11 & 0.28 & $\{0, 22\}$ & {\color{applegreen!70!black}72} & 12.5 & \textbf{12} & 6 & 0.96 & $\{0, 1, 2, 3, 4, 5, 6, 7, 8, 9, 10, 17\}$ \\
    {\color{accentred}23} & 7.3 & \textbf{1} & 23 & 0.14 & $\{23\}$ & {\color{accentred}73} & 12.6 & \textbf{1} & 73 & 0.08 & $\{73\}$ \\
    24 & 7.4 & \textbf{6} & 4 & 0.81 & $\{0, 1, 2, 3, 4, 14\}$ & 74 & 12.7 & \textbf{2} & 37 & 0.16 & $\{0, 74\}$ \\
    25 & 7.6 & \textbf{5} & 5 & 0.66 & $\{0, 1, 2, 3, 19\}$ & 75 & 12.8 & \textbf{5} & 15 & 0.39 & $\{0, 1, 2, 3, 69\}$ \\
    26 & 7.7 & \textbf{2} & 13 & 0.26 & $\{0, 26\}$ & 76 & 12.8 & \textbf{4} & 19 & 0.31 & $\{0, 1, 2, 73\}$ \\
    27 & 7.9 & \textbf{3} & 9 & 0.38 & $\{0, 1, 26\}$ & 77 & 12.9 & \textbf{11} & 7 & 0.85 & $\{0, 1, 2, 3, 4, 5, 6, 7, 8, 9, 32\}$ \\
    28 & 8.0 & \textbf{7} & 4 & 0.88 & $\{0, 1, 2, 3, 4, 5, 13\}$ & {\color{pearamber!80!black}78} & 13.0 & \textbf{13} & 6 & 1.00 & $\{0, 1, 2, 3, 4, 5, 6, 7, 8, 9, 10, 11, 12\}$ \\
    {\color{accentred}29} & 8.1 & \textbf{1} & 29 & 0.12 & $\{29\}$ & {\color{accentred}79} & 13.1 & \textbf{1} & 79 & 0.08 & $\{79\}$ \\
    30 & 8.3 & \textbf{6} & 5 & 0.73 & $\{0, 1, 2, 3, 4, 20\}$ & 80 & 13.2 & \textbf{10} & 8 & 0.76 & $\{0, 1, 2, 3, 4, 5, 6, 7, 8, 44\}$ \\
    {\color{accentred}31} & 8.4 & \textbf{1} & 31 & 0.12 & $\{31\}$ & 81 & 13.2 & \textbf{9} & 9 & 0.68 & $\{0, 1, 2, 3, 4, 5, 6, 7, 53\}$ \\
    {\color{applegreen!70!black}32} & 8.5 & \textbf{8} & 4 & 0.94 & $\{0, 1, 2, 3, 4, 5, 6, 11\}$ & 82 & 13.3 & \textbf{2} & 41 & 0.15 & $\{0, 82\}$ \\
    33 & 8.6 & \textbf{3} & 11 & 0.35 & $\{0, 1, 32\}$ & {\color{accentred}83} & 13.4 & \textbf{1} & 83 & 0.07 & $\{83\}$ \\
    34 & 8.8 & \textbf{2} & 17 & 0.23 & $\{0, 34\}$ & 84 & 13.5 & \textbf{12} & 7 & 0.89 & $\{0, 1, 2, 3, 4, 5, 6, 7, 8, 9, 10, 29\}$ \\
    35 & 8.9 & \textbf{7} & 5 & 0.79 & $\{0, 1, 2, 3, 4, 5, 20\}$ & 85 & 13.5 & \textbf{5} & 17 & 0.37 & $\{0, 1, 2, 3, 79\}$ \\
    {\color{pearamber!80!black}36} & 9.0 & \textbf{9} & 4 & 1.00 & $\{0, 1, 2, 3, 4, 5, 6, 7, 8\}$ & 86 & 13.6 & \textbf{2} & 43 & 0.15 & $\{0, 86\}$ \\
    {\color{accentred}37} & 9.1 & \textbf{1} & 37 & 0.11 & $\{37\}$ & 87 & 13.7 & \textbf{3} & 29 & 0.22 & $\{0, 1, 86\}$ \\
    38 & 9.2 & \textbf{2} & 19 & 0.22 & $\{0, 38\}$ & 88 & 13.8 & \textbf{11} & 8 & 0.80 & $\{0, 1, 2, 3, 4, 5, 6, 7, 8, 9, 43\}$ \\
    39 & 9.3 & \textbf{3} & 13 & 0.32 & $\{0, 1, 38\}$ & {\color{accentred}89} & 13.9 & \textbf{1} & 89 & 0.07 & $\{89\}$ \\
    40 & 9.5 & \textbf{8} & 5 & 0.85 & $\{0, 1, 2, 3, 4, 5, 6, 19\}$ & 90 & 13.9 & \textbf{10} & 9 & 0.72 & $\{0, 1, 2, 3, 4, 5, 6, 7, 8, 54\}$ \\
    {\color{accentred}41} & 9.6 & \textbf{1} & 41 & 0.10 & $\{41\}$ & {\color{applegreen!70!black}91} & 14.0 & \textbf{13} & 7 & 0.93 & $\{0, 1, 2, 3, 4, 5, 6, 7, 8, 9, 10, 11, 25\}$ \\
    42 & 9.7 & \textbf{7} & 6 & 0.72 & $\{0, 1, 2, 3, 4, 5, 27\}$ & 92 & 14.1 & \textbf{4} & 23 & 0.28 & $\{0, 1, 2, 89\}$ \\
    {\color{accentred}43} & 9.8 & \textbf{1} & 43 & 0.10 & $\{43\}$ & 93 & 14.1 & \textbf{3} & 31 & 0.21 & $\{0, 1, 92\}$ \\
    44 & 9.9 & \textbf{4} & 11 & 0.40 & $\{0, 1, 2, 41\}$ & 94 & 14.2 & \textbf{2} & 47 & 0.14 & $\{0, 94\}$ \\
    45 & 10.0 & \textbf{9} & 5 & 0.90 & $\{0, 1, 2, 3, 4, 5, 6, 7, 17\}$ & 95 & 14.3 & \textbf{5} & 19 & 0.35 & $\{0, 1, 2, 3, 89\}$ \\
    46 & 10.1 & \textbf{2} & 23 & 0.20 & $\{0, 46\}$ & 96 & 14.4 & \textbf{12} & 8 & 0.84 & $\{0, 1, 2, 3, 4, 5, 6, 7, 8, 9, 10, 41\}$ \\
    {\color{accentred}47} & 10.2 & \textbf{1} & 47 & 0.10 & $\{47\}$ & {\color{accentred}97} & 14.4 & \textbf{1} & 97 & 0.07 & $\{97\}$ \\
    48 & 10.3 & \textbf{8} & 6 & 0.78 & $\{0, 1, 2, 3, 4, 5, 6, 27\}$ & {\color{applegreen!70!black}98} & 14.5 & \textbf{14} & 7 & 0.96 & $\{0, 1, 2, 3, 4, 5, 6, 7, 8, 9, 10, 11, 12, 20\}$ \\
    49 & 10.4 & \textbf{7} & 7 & 0.67 & $\{0, 1, 2, 3, 4, 5, 34\}$ & 99 & 14.6 & \textbf{11} & 9 & 0.75 & $\{0, 1, 2, 3, 4, 5, 6, 7, 8, 9, 54\}$ \\
    {\color{applegreen!70!black}50} & 10.5 & \textbf{10} & 5 & 0.95 & $\{0, 1, 2, 3, 4, 5, 6, 7, 8, 14\}$ & 100 & 14.7 & \textbf{10} & 10 & 0.68 & $\{0, 1, 2, 3, 4, 5, 6, 7, 8, 64\}$ \\
\bottomrule
\end{tabular}
}
\end{table*}

\section{Concluding Remarks}

What begins as a simple puzzle about apples and pears reveals, upon closer inspection, a surprisingly rich interplay between two branches of elementary mathematics: combinatorics and number theory.  The connection to triangular numbers (Section~\ref{sec:perfect}), the sharp collapse at primes (Section~\ref{sec:primes}), and the role of highly composite numbers all point to a structure that is deeper than the puzzle's recreational origins might suggest.

The heart of the problem lies in recognising that it decomposes into two independent constraints.  The apples impose a divisibility condition, namely that the number of baskets must divide~$N$, while the pears impose a combinatorial one: $n$~distinct non-negative integers must sum to exactly~$N$, which by the minimum-sum lemma (Lemma~\ref{lem:minsum}) requires $n(n-1)/2 \le N$.  Neither constraint alone is difficult; it is their intersection that gives the problem its character.

For the original puzzle with $N = 60$, the answer is $10$~baskets, each holding $6$~apples, with pears distributed as $\{0, 1, 2, \ldots, 8, 24\}$ (see Figure~\ref{fig:closing}).  The general solution (Theorem~\ref{thm:general}), that $n_{\max}$ is the largest divisor of~$N$ not exceeding $(1 + \sqrt{1+8N})/2$, is elegant in statement but resists a tidy closed form.  This is because the divisibility constraint ties the answer to the prime factorisation of~$N$ in ways that no smooth function can capture~\cite{hardy2008}.

This dependence on arithmetic structure leads to the natural classification developed in Section~\ref{sec:perfect}.  At one extreme lie the \emph{perfect} values, $N = n(n-1)/2$ for odd~$n$, where both constraints are simultaneously tight and the pear distribution is uniquely $\{0,1,\ldots,n-1\}$.  At the other extreme sit the primes: for any prime $p \ge 17$, the only available divisors are~$1$ and $p$ itself, and since $p$ far exceeds the pear bound, the problem collapses to a single basket, which is the worst possible outcome.  Between these poles, highly composite numbers in the sense of Ramanujan~\cite{ramanujan1915} perform reliably well, their dense divisor sets ensuring that one lands close to the theoretical maximum.

The asymptotic growth of $n_{\max}$ as $\sqrt{2N}$ is governed by the pear constraint, but whether a given~$N$ actually achieves this rate depends entirely on whether it has a divisor in the right neighbourhood.  This tension between a smooth analytic bound and an irregular arithmetic condition is what makes the problem, in the author's view, a particularly appealing example of how elementary questions can open doors to deeper structure.  The number-theoretic landscape that emerges is, in the spirit of Hardy and Wright~\cite{hardy2008}, one defined by the interplay of regularity and irregularity in the distribution of divisors.

\end{document}

%% file: closing_baskets.tex
  \draw[basketbrown, thick, fill=basketbrown!5]
    (-0.55,-0.08) .. controls (-0.65,0.26) and (-0.4,0.5525) ..
    (0.0,0.65) .. controls (0.4,0.5525) and (0.65,0.26) ..
    (0.55,-0.08) -- cycle;
  \fill[red!70!black] (-0.200,0.080) circle (0.06);
  \draw[brown!50!black, line width=0.2pt] (-0.200,0.140) -- (-0.195,0.170);
  \fill[red!70!black] (0.000,0.080) circle (0.06);
  \draw[brown!50!black, line width=0.2pt] (0.000,0.140) -- (0.005,0.170);
  \fill[red!70!black] (0.200,0.080) circle (0.06);
  \draw[brown!50!black, line width=0.2pt] (0.200,0.140) -- (0.205,0.170);
  \fill[red!70!black] (-0.200,0.260) circle (0.06);
  \draw[brown!50!black, line width=0.2pt] (-0.200,0.320) -- (-0.195,0.350);
  \fill[red!70!black] (0.000,0.260) circle (0.06);
  \draw[brown!50!black, line width=0.2pt] (0.000,0.320) -- (0.005,0.350);
  \fill[red!70!black] (0.200,0.260) circle (0.06);
  \draw[brown!50!black, line width=0.2pt] (0.200,0.320) -- (0.205,0.350);
  \node[font=\tiny, below] at (0.0,-0.22) {0};
  \draw[basketbrown, thick, fill=basketbrown!5]
    (1.0,-0.08) .. controls (0.9,0.268) and (1.15,0.5695) ..
    (1.55,0.67) .. controls (1.9500000000000002,0.5695) and (2.2,0.268) ..
    (2.1,-0.08) -- cycle;
  \fill[red!70!black] (1.350,0.080) circle (0.06);
  \draw[brown!50!black, line width=0.2pt] (1.350,0.140) -- (1.355,0.170);
  \fill[red!70!black] (1.550,0.080) circle (0.06);
  \draw[brown!50!black, line width=0.2pt] (1.550,0.140) -- (1.555,0.170);
  \fill[red!70!black] (1.750,0.080) circle (0.06);
  \draw[brown!50!black, line width=0.2pt] (1.750,0.140) -- (1.755,0.170);
  \fill[red!70!black] (1.350,0.260) circle (0.06);
  \draw[brown!50!black, line width=0.2pt] (1.350,0.320) -- (1.355,0.350);
  \fill[red!70!black] (1.550,0.260) circle (0.06);
  \draw[brown!50!black, line width=0.2pt] (1.550,0.320) -- (1.555,0.350);
  \fill[red!70!black] (1.750,0.260) circle (0.06);
  \draw[brown!50!black, line width=0.2pt] (1.750,0.320) -- (1.755,0.350);
  \fill[yellow!55!green!70] (1.550,0.400) ellipse (0.05 and 0.04);
  \fill[yellow!50!green!60] (1.550,0.435) ellipse (0.035 and 0.035);
  \node[font=\tiny, below] at (1.55,-0.22) {1};
  \draw[basketbrown, thick, fill=basketbrown!5]
    (2.55,-0.08) .. controls (2.45,0.268) and (2.7,0.5695) ..
    (3.1,0.67) .. controls (3.5,0.5695) and (3.75,0.268) ..
    (3.6500000000000004,-0.08) -- cycle;
  \fill[red!70!black] (2.900,0.080) circle (0.06);
  \draw[brown!50!black, line width=0.2pt] (2.900,0.140) -- (2.905,0.170);
  \fill[red!70!black] (3.100,0.080) circle (0.06);
  \draw[brown!50!black, line width=0.2pt] (3.100,0.140) -- (3.105,0.170);
  \fill[red!70!black] (3.300,0.080) circle (0.06);
  \draw[brown!50!black, line width=0.2pt] (3.300,0.140) -- (3.305,0.170);
  \fill[red!70!black] (2.900,0.260) circle (0.06);
  \draw[brown!50!black, line width=0.2pt] (2.900,0.320) -- (2.905,0.350);
  \fill[red!70!black] (3.100,0.260) circle (0.06);
  \draw[brown!50!black, line width=0.2pt] (3.100,0.320) -- (3.105,0.350);
  \fill[red!70!black] (3.300,0.260) circle (0.06);
  \draw[brown!50!black, line width=0.2pt] (3.300,0.320) -- (3.305,0.350);
  \fill[yellow!55!green!70] (2.980,0.400) ellipse (0.05 and 0.04);
  \fill[yellow!50!green!60] (2.980,0.435) ellipse (0.035 and 0.035);
  \fill[yellow!55!green!70] (3.220,0.400) ellipse (0.05 and 0.04);
  \fill[yellow!50!green!60] (3.220,0.435) ellipse (0.035 and 0.035);
  \node[font=\tiny, below] at (3.1,-0.22) {2};
  \draw[basketbrown, thick, fill=basketbrown!5]
    (4.1000000000000005,-0.08) .. controls (4.0,0.268) and (4.25,0.5695) ..
    (4.65,0.67) .. controls (5.050000000000001,0.5695) and (5.300000000000001,0.268) ..
    (5.2,-0.08) -- cycle;
  \fill[red!70!black] (4.450,0.080) circle (0.06);
  \draw[brown!50!black, line width=0.2pt] (4.450,0.140) -- (4.455,0.170);
  \fill[red!70!black] (4.650,0.080) circle (0.06);
  \draw[brown!50!black, line width=0.2pt] (4.650,0.140) -- (4.655,0.170);
  \fill[red!70!black] (4.850,0.080) circle (0.06);
  \draw[brown!50!black, line width=0.2pt] (4.850,0.140) -- (4.855,0.170);
  \fill[red!70!black] (4.450,0.260) circle (0.06);
  \draw[brown!50!black, line width=0.2pt] (4.450,0.320) -- (4.455,0.350);
  \fill[red!70!black] (4.650,0.260) circle (0.06);
  \draw[brown!50!black, line width=0.2pt] (4.650,0.320) -- (4.655,0.350);
  \fill[red!70!black] (4.850,0.260) circle (0.06);
  \draw[brown!50!black, line width=0.2pt] (4.850,0.320) -- (4.855,0.350);
  \fill[yellow!55!green!70] (4.410,0.400) ellipse (0.05 and 0.04);
  \fill[yellow!50!green!60] (4.410,0.435) ellipse (0.035 and 0.035);
  \fill[yellow!55!green!70] (4.650,0.400) ellipse (0.05 and 0.04);
  \fill[yellow!50!green!60] (4.650,0.435) ellipse (0.035 and 0.035);
  \fill[yellow!55!green!70] (4.890,0.400) ellipse (0.05 and 0.04);
  \fill[yellow!50!green!60] (4.890,0.435) ellipse (0.035 and 0.035);
  \node[font=\tiny, below] at (4.65,-0.22) {3};
  \draw[basketbrown, thick, fill=basketbrown!5]
    (5.65,-0.08) .. controls (5.55,0.316) and (5.8,0.6714999999999999) ..
    (6.2,0.7899999999999999) .. controls (6.6000000000000005,0.6714999999999999) and (6.8500000000000005,0.316) ..
    (6.75,-0.08) -- cycle;
  \fill[red!70!black] (6.000,0.080) circle (0.06);
  \draw[brown!50!black, line width=0.2pt] (6.000,0.140) -- (6.005,0.170);
  \fill[red!70!black] (6.200,0.080) circle (0.06);
  \draw[brown!50!black, line width=0.2pt] (6.200,0.140) -- (6.205,0.170);
  \fill[red!70!black] (6.400,0.080) circle (0.06);
  \draw[brown!50!black, line width=0.2pt] (6.400,0.140) -- (6.405,0.170);
  \fill[red!70!black] (6.000,0.260) circle (0.06);
  \draw[brown!50!black, line width=0.2pt] (6.000,0.320) -- (6.005,0.350);
  \fill[red!70!black] (6.200,0.260) circle (0.06);
  \draw[brown!50!black, line width=0.2pt] (6.200,0.320) -- (6.205,0.350);
  \fill[red!70!black] (6.400,0.260) circle (0.06);
  \draw[brown!50!black, line width=0.2pt] (6.400,0.320) -- (6.405,0.350);
  \fill[yellow!55!green!70] (5.960,0.400) ellipse (0.05 and 0.04);
  \fill[yellow!50!green!60] (5.960,0.435) ellipse (0.035 and 0.035);
  \fill[yellow!55!green!70] (6.200,0.400) ellipse (0.05 and 0.04);
  \fill[yellow!50!green!60] (6.200,0.435) ellipse (0.035 and 0.035);
  \fill[yellow!55!green!70] (6.440,0.400) ellipse (0.05 and 0.04);
  \fill[yellow!50!green!60] (6.440,0.435) ellipse (0.035 and 0.035);
  \fill[yellow!55!green!70] (6.200,0.520) ellipse (0.05 and 0.04);
  \fill[yellow!50!green!60] (6.200,0.555) ellipse (0.035 and 0.035);
  \node[font=\tiny, below] at (6.2,-0.22) {4};
  \draw[basketbrown, thick, fill=basketbrown!5]
    (7.2,-0.08) .. controls (7.1,0.316) and (7.35,0.6714999999999999) ..
    (7.75,0.7899999999999999) .. controls (8.15,0.6714999999999999) and (8.4,0.316) ..
    (8.3,-0.08) -- cycle;
  \fill[red!70!black] (7.550,0.080) circle (0.06);
  \draw[brown!50!black, line width=0.2pt] (7.550,0.140) -- (7.555,0.170);
  \fill[red!70!black] (7.750,0.080) circle (0.06);
  \draw[brown!50!black, line width=0.2pt] (7.750,0.140) -- (7.755,0.170);
  \fill[red!70!black] (7.950,0.080) circle (0.06);
  \draw[brown!50!black, line width=0.2pt] (7.950,0.140) -- (7.955,0.170);
  \fill[red!70!black] (7.550,0.260) circle (0.06);
  \draw[brown!50!black, line width=0.2pt] (7.550,0.320) -- (7.555,0.350);
  \fill[red!70!black] (7.750,0.260) circle (0.06);
  \draw[brown!50!black, line width=0.2pt] (7.750,0.320) -- (7.755,0.350);
  \fill[red!70!black] (7.950,0.260) circle (0.06);
  \draw[brown!50!black, line width=0.2pt] (7.950,0.320) -- (7.955,0.350);
  \fill[yellow!55!green!70] (7.510,0.400) ellipse (0.05 and 0.04);
  \fill[yellow!50!green!60] (7.510,0.435) ellipse (0.035 and 0.035);
  \fill[yellow!55!green!70] (7.750,0.400) ellipse (0.05 and 0.04);
  \fill[yellow!50!green!60] (7.750,0.435) ellipse (0.035 and 0.035);
  \fill[yellow!55!green!70] (7.990,0.400) ellipse (0.05 and 0.04);
  \fill[yellow!50!green!60] (7.990,0.435) ellipse (0.035 and 0.035);
  \fill[yellow!55!green!70] (7.630,0.520) ellipse (0.05 and 0.04);
  \fill[yellow!50!green!60] (7.630,0.555) ellipse (0.035 and 0.035);
  \fill[yellow!55!green!70] (7.870,0.520) ellipse (0.05 and 0.04);
  \fill[yellow!50!green!60] (7.870,0.555) ellipse (0.035 and 0.035);
  \node[font=\tiny, below] at (7.75,-0.22) {5};
  \draw[basketbrown, thick, fill=basketbrown!5]
    (8.75,-0.08) .. controls (8.65,0.316) and (8.9,0.6714999999999999) ..
    (9.3,0.7899999999999999) .. controls (9.700000000000001,0.6714999999999999) and (9.950000000000001,0.316) ..
    (9.850000000000001,-0.08) -- cycle;
  \fill[red!70!black] (9.100,0.080) circle (0.06);
  \draw[brown!50!black, line width=0.2pt] (9.100,0.140) -- (9.105,0.170);
  \fill[red!70!black] (9.300,0.080) circle (0.06);
  \draw[brown!50!black, line width=0.2pt] (9.300,0.140) -- (9.305,0.170);
  \fill[red!70!black] (9.500,0.080) circle (0.06);
  \draw[brown!50!black, line width=0.2pt] (9.500,0.140) -- (9.505,0.170);
  \fill[red!70!black] (9.100,0.260) circle (0.06);
  \draw[brown!50!black, line width=0.2pt] (9.100,0.320) -- (9.105,0.350);
  \fill[red!70!black] (9.300,0.260) circle (0.06);
  \draw[brown!50!black, line width=0.2pt] (9.300,0.320) -- (9.305,0.350);
  \fill[red!70!black] (9.500,0.260) circle (0.06);
  \draw[brown!50!black, line width=0.2pt] (9.500,0.320) -- (9.505,0.350);
  \fill[yellow!55!green!70] (9.060,0.400) ellipse (0.05 and 0.04);
  \fill[yellow!50!green!60] (9.060,0.435) ellipse (0.035 and 0.035);
  \fill[yellow!55!green!70] (9.300,0.400) ellipse (0.05 and 0.04);
  \fill[yellow!50!green!60] (9.300,0.435) ellipse (0.035 and 0.035);
  \fill[yellow!55!green!70] (9.540,0.400) ellipse (0.05 and 0.04);
  \fill[yellow!50!green!60] (9.540,0.435) ellipse (0.035 and 0.035);
  \fill[yellow!55!green!70] (9.060,0.520) ellipse (0.05 and 0.04);
  \fill[yellow!50!green!60] (9.060,0.555) ellipse (0.035 and 0.035);
  \fill[yellow!55!green!70] (9.300,0.520) ellipse (0.05 and 0.04);
  \fill[yellow!50!green!60] (9.300,0.555) ellipse (0.035 and 0.035);
  \fill[yellow!55!green!70] (9.540,0.520) ellipse (0.05 and 0.04);
  \fill[yellow!50!green!60] (9.540,0.555) ellipse (0.035 and 0.035);
  \node[font=\tiny, below] at (9.3,-0.22) {6};
  \draw[basketbrown, thick, fill=basketbrown!5]
    (10.299999999999999,-0.08) .. controls (10.2,0.36400000000000005) and (10.45,0.7735) ..
    (10.85,0.91) .. controls (11.25,0.7735) and (11.5,0.36400000000000005) ..
    (11.4,-0.08) -- cycle;
  \fill[red!70!black] (10.650,0.080) circle (0.06);
  \draw[brown!50!black, line width=0.2pt] (10.650,0.140) -- (10.655,0.170);
  \fill[red!70!black] (10.850,0.080) circle (0.06);
  \draw[brown!50!black, line width=0.2pt] (10.850,0.140) -- (10.855,0.170);
  \fill[red!70!black] (11.050,0.080) circle (0.06);
  \draw[brown!50!black, line width=0.2pt] (11.050,0.140) -- (11.055,0.170);
  \fill[red!70!black] (10.650,0.260) circle (0.06);
  \draw[brown!50!black, line width=0.2pt] (10.650,0.320) -- (10.655,0.350);
  \fill[red!70!black] (10.850,0.260) circle (0.06);
  \draw[brown!50!black, line width=0.2pt] (10.850,0.320) -- (10.855,0.350);
  \fill[red!70!black] (11.050,0.260) circle (0.06);
  \draw[brown!50!black, line width=0.2pt] (11.050,0.320) -- (11.055,0.350);
  \fill[yellow!55!green!70] (10.610,0.400) ellipse (0.05 and 0.04);
  \fill[yellow!50!green!60] (10.610,0.435) ellipse (0.035 and 0.035);
  \fill[yellow!55!green!70] (10.850,0.400) ellipse (0.05 and 0.04);
  \fill[yellow!50!green!60] (10.850,0.435) ellipse (0.035 and 0.035);
  \fill[yellow!55!green!70] (11.090,0.400) ellipse (0.05 and 0.04);
  \fill[yellow!50!green!60] (11.090,0.435) ellipse (0.035 and 0.035);
  \fill[yellow!55!green!70] (10.610,0.520) ellipse (0.05 and 0.04);
  \fill[yellow!50!green!60] (10.610,0.555) ellipse (0.035 and 0.035);
  \fill[yellow!55!green!70] (10.850,0.520) ellipse (0.05 and 0.04);
  \fill[yellow!50!green!60] (10.850,0.555) ellipse (0.035 and 0.035);
  \fill[yellow!55!green!70] (11.090,0.520) ellipse (0.05 and 0.04);
  \fill[yellow!50!green!60] (11.090,0.555) ellipse (0.035 and 0.035);
  \fill[yellow!55!green!70] (10.850,0.640) ellipse (0.05 and 0.04);
  \fill[yellow!50!green!60] (10.850,0.675) ellipse (0.035 and 0.035);
  \node[font=\tiny, below] at (10.85,-0.22) {7};
  \draw[basketbrown, thick, fill=basketbrown!5]
    (11.85,-0.08) .. controls (11.75,0.36400000000000005) and (12.0,0.7735) ..
    (12.4,0.91) .. controls (12.8,0.7735) and (13.05,0.36400000000000005) ..
    (12.950000000000001,-0.08) -- cycle;
  \fill[red!70!black] (12.200,0.080) circle (0.06);
  \draw[brown!50!black, line width=0.2pt] (12.200,0.140) -- (12.205,0.170);
  \fill[red!70!black] (12.400,0.080) circle (0.06);
  \draw[brown!50!black, line width=0.2pt] (12.400,0.140) -- (12.405,0.170);
  \fill[red!70!black] (12.600,0.080) circle (0.06);
  \draw[brown!50!black, line width=0.2pt] (12.600,0.140) -- (12.605,0.170);
  \fill[red!70!black] (12.200,0.260) circle (0.06);
  \draw[brown!50!black, line width=0.2pt] (12.200,0.320) -- (12.205,0.350);
  \fill[red!70!black] (12.400,0.260) circle (0.06);
  \draw[brown!50!black, line width=0.2pt] (12.400,0.320) -- (12.405,0.350);
  \fill[red!70!black] (12.600,0.260) circle (0.06);
  \draw[brown!50!black, line width=0.2pt] (12.600,0.320) -- (12.605,0.350);
  \fill[yellow!55!green!70] (12.160,0.400) ellipse (0.05 and 0.04);
  \fill[yellow!50!green!60] (12.160,0.435) ellipse (0.035 and 0.035);
  \fill[yellow!55!green!70] (12.400,0.400) ellipse (0.05 and 0.04);
  \fill[yellow!50!green!60] (12.400,0.435) ellipse (0.035 and 0.035);
  \fill[yellow!55!green!70] (12.640,0.400) ellipse (0.05 and 0.04);
  \fill[yellow!50!green!60] (12.640,0.435) ellipse (0.035 and 0.035);
  \fill[yellow!55!green!70] (12.160,0.520) ellipse (0.05 and 0.04);
  \fill[yellow!50!green!60] (12.160,0.555) ellipse (0.035 and 0.035);
  \fill[yellow!55!green!70] (12.400,0.520) ellipse (0.05 and 0.04);
  \fill[yellow!50!green!60] (12.400,0.555) ellipse (0.035 and 0.035);
  \fill[yellow!55!green!70] (12.640,0.520) ellipse (0.05 and 0.04);
  \fill[yellow!50!green!60] (12.640,0.555) ellipse (0.035 and 0.035);
  \fill[yellow!55!green!70] (12.280,0.640) ellipse (0.05 and 0.04);
  \fill[yellow!50!green!60] (12.280,0.675) ellipse (0.035 and 0.035);
  \fill[yellow!55!green!70] (12.520,0.640) ellipse (0.05 and 0.04);
  \fill[yellow!50!green!60] (12.520,0.675) ellipse (0.035 and 0.035);
  \node[font=\tiny, below] at (12.4,-0.22) {8};
  \draw[basketbrown, thick, fill=basketbrown!5]
    (13.4,-0.08) .. controls (13.3,0.6040000000000001) and (13.55,1.2834999999999999) ..
    (13.950000000000001,1.51) .. controls (14.350000000000001,1.2834999999999999) and (14.600000000000001,0.6040000000000001) ..
    (14.500000000000002,-0.08) -- cycle;
  \fill[red!70!black] (13.750,0.080) circle (0.06);
  \draw[brown!50!black, line width=0.2pt] (13.750,0.140) -- (13.755,0.170);
  \fill[red!70!black] (13.950,0.080) circle (0.06);
  \draw[brown!50!black, line width=0.2pt] (13.950,0.140) -- (13.955,0.170);
  \fill[red!70!black] (14.150,0.080) circle (0.06);
  \draw[brown!50!black, line width=0.2pt] (14.150,0.140) -- (14.155,0.170);
  \fill[red!70!black] (13.750,0.260) circle (0.06);
  \draw[brown!50!black, line width=0.2pt] (13.750,0.320) -- (13.755,0.350);
  \fill[red!70!black] (13.950,0.260) circle (0.06);
  \draw[brown!50!black, line width=0.2pt] (13.950,0.320) -- (13.955,0.350);
  \fill[red!70!black] (14.150,0.260) circle (0.06);
  \draw[brown!50!black, line width=0.2pt] (14.150,0.320) -- (14.155,0.350);
  \fill[yellow!55!green!70] (13.710,0.400) ellipse (0.05 and 0.04);
  \fill[yellow!50!green!60] (13.710,0.435) ellipse (0.035 and 0.035);
  \fill[yellow!55!green!70] (13.950,0.400) ellipse (0.05 and 0.04);
  \fill[yellow!50!green!60] (13.950,0.435) ellipse (0.035 and 0.035);
  \fill[yellow!55!green!70] (14.190,0.400) ellipse (0.05 and 0.04);
  \fill[yellow!50!green!60] (14.190,0.435) ellipse (0.035 and 0.035);
  \fill[yellow!55!green!70] (13.710,0.520) ellipse (0.05 and 0.04);
  \fill[yellow!50!green!60] (13.710,0.555) ellipse (0.035 and 0.035);
  \fill[yellow!55!green!70] (13.950,0.520) ellipse (0.05 and 0.04);
  \fill[yellow!50!green!60] (13.950,0.555) ellipse (0.035 and 0.035);
  \fill[yellow!55!green!70] (14.190,0.520) ellipse (0.05 and 0.04);
  \fill[yellow!50!green!60] (14.190,0.555) ellipse (0.035 and 0.035);
  \fill[yellow!55!green!70] (13.710,0.640) ellipse (0.05 and 0.04);
  \fill[yellow!50!green!60] (13.710,0.675) ellipse (0.035 and 0.035);
  \fill[yellow!55!green!70] (13.950,0.640) ellipse (0.05 and 0.04);
  \fill[yellow!50!green!60] (13.950,0.675) ellipse (0.035 and 0.035);
  \fill[yellow!55!green!70] (14.190,0.640) ellipse (0.05 and 0.04);
  \fill[yellow!50!green!60] (14.190,0.675) ellipse (0.035 and 0.035);
  \fill[yellow!55!green!70] (13.710,0.760) ellipse (0.05 and 0.04);
  \fill[yellow!50!green!60] (13.710,0.795) ellipse (0.035 and 0.035);
  \fill[yellow!55!green!70] (13.950,0.760) ellipse (0.05 and 0.04);
  \fill[yellow!50!green!60] (13.950,0.795) ellipse (0.035 and 0.035);
  \fill[yellow!55!green!70] (14.190,0.760) ellipse (0.05 and 0.04);
  \fill[yellow!50!green!60] (14.190,0.795) ellipse (0.035 and 0.035);
  \fill[yellow!55!green!70] (13.710,0.880) ellipse (0.05 and 0.04);
  \fill[yellow!50!green!60] (13.710,0.915) ellipse (0.035 and 0.035);
  \fill[yellow!55!green!70] (13.950,0.880) ellipse (0.05 and 0.04);
  \fill[yellow!50!green!60] (13.950,0.915) ellipse (0.035 and 0.035);
  \fill[yellow!55!green!70] (14.190,0.880) ellipse (0.05 and 0.04);
  \fill[yellow!50!green!60] (14.190,0.915) ellipse (0.035 and 0.035);
  \fill[yellow!55!green!70] (13.710,1.000) ellipse (0.05 and 0.04);
  \fill[yellow!50!green!60] (13.710,1.035) ellipse (0.035 and 0.035);
  \fill[yellow!55!green!70] (13.950,1.000) ellipse (0.05 and 0.04);
  \fill[yellow!50!green!60] (13.950,1.035) ellipse (0.035 and 0.035);
  \fill[yellow!55!green!70] (14.190,1.000) ellipse (0.05 and 0.04);
  \fill[yellow!50!green!60] (14.190,1.035) ellipse (0.035 and 0.035);
  \fill[yellow!55!green!70] (13.710,1.120) ellipse (0.05 and 0.04);
  \fill[yellow!50!green!60] (13.710,1.155) ellipse (0.035 and 0.035);
  \fill[yellow!55!green!70] (13.950,1.120) ellipse (0.05 and 0.04);
  \fill[yellow!50!green!60] (13.950,1.155) ellipse (0.035 and 0.035);
  \fill[yellow!55!green!70] (14.190,1.120) ellipse (0.05 and 0.04);
  \fill[yellow!50!green!60] (14.190,1.155) ellipse (0.035 and 0.035);
  \fill[yellow!55!green!70] (13.710,1.240) ellipse (0.05 and 0.04);
  \fill[yellow!50!green!60] (13.710,1.275) ellipse (0.035 and 0.035);
  \fill[yellow!55!green!70] (13.950,1.240) ellipse (0.05 and 0.04);
  \fill[yellow!50!green!60] (13.950,1.275) ellipse (0.035 and 0.035);
  \fill[yellow!55!green!70] (14.190,1.240) ellipse (0.05 and 0.04);
  \fill[yellow!50!green!60] (14.190,1.275) ellipse (0.035 and 0.035);
  \node[font=\tiny, below] at (13.950000000000001,-0.22) {24};

%% file: basket_problem_nature.bbl
\begin{thebibliography}{10}

\bibitem{hardy2008}
G.\,H.~Hardy and E.\,M.~Wright,
\textit{An Introduction to the Theory of Numbers},
6th ed., Oxford University Press, 2008.
Chapters 16--18 cover the divisor function $d(n)$, its average order, and the distribution of divisors.

\bibitem{andrews1998}
G.\,E.~Andrews,
\textit{The Theory of Partitions},
Cambridge University Press, 1998.
The minimum-sum lemma (Lemma~\ref{lem:minsum}) is a special case of results on partitions into distinct parts; see Chapter~1.

\bibitem{ramanujan1915}
S.~Ramanujan,
``Highly composite numbers,''
\textit{Proceedings of the London Mathematical Society}, vol.~14, pp.~347--409, 1915.
The classification of integers by divisor richness in Section~4 echoes Ramanujan's study of numbers with unusually many divisors.

\bibitem{apostol1976}
T.\,M.~Apostol,
\textit{Introduction to Analytic Number Theory},
Springer, 1976.
Chapters 3--4 provide a rigorous treatment of the prime counting function $\pi(x)$ and the prime number theorem referenced in Section~\ref{sec:primes}.

\bibitem{hadamard1896}
J.~Hadamard,
``Sur la distribution des z\'eros de la fonction $\zeta(s)$ et ses cons\'equences arithm\'etiques,''
\textit{Bulletin de la Soci\'et\'e Math\'ematique de France}, vol.~24, pp.~199--220, 1896.

\bibitem{vallee1896}
C.-J.~de la Vall\'ee Poussin,
``Recherches analytiques sur la th\'eorie des nombres premiers,''
\textit{Annales de la Soci\'et\'e Scientifique de Bruxelles}, vol.~20, pp.~183--256, 1896.
References~\cite{hadamard1896} and~\cite{vallee1896} independently proved the prime number theorem, $\pi(x) \sim x/\ln x$.

\bibitem{conway1996}
J.\,H.~Conway and R.\,K.~Guy,
\textit{The Book of Numbers},
Copernicus (Springer), 1996.
A survey of number-theoretic curiosities, including triangular numbers and related combinatorial puzzles.

\bibitem{gardner1997}
M.~Gardner,
\textit{The Last Recreations: Hydras, Eggs, and Other Mathematical Mystifications},
Copernicus (Springer), 1997.
A classic collection of recreational mathematics problems in the spirit of the present puzzle.

\bibitem{oeis_triangular}
OEIS Foundation,
``Sequence A000217: Triangular numbers,''
\textit{The On-Line Encyclopedia of Integer Sequences}, 2024.
Available at \url{https://oeis.org/A000217}.

\bibitem{niven1991}
I.~Niven, H.\,S.~Zuckerman, and H.\,L.~Montgomery,
\textit{An Introduction to the Theory of Numbers},
5th ed., Wiley, 1991.
An alternative reference for the divisibility and prime-theoretic results used in Theorem~\ref{thm:general}.

\end{thebibliography}
